\newtheorem{theorem}{Theorem}[section]
\newtheorem{lemma}[theorem]{Lemma}
\newtheorem{note}[theorem]{Note}
\theoremstyle{definition}
\newcommand{\Z}{\ensuremath{\mathbb{Z}}}
\newcommand{\R}{\ensuremath{\mathbb{R}}}
\def \< {\langle}
\def \> {\rangle}
\begin{document}

\title{Improved bounds on the supremum of autoconvolutions}
\author{M\'at\'e~Matolcsi, Carlos Vinuesa}
\address{M.M.: Alfr\'ed R\'enyi Institute of Mathematics, Realtanoda u 13-15, Budapest,
Hungary (also at BME Department of Analysis, Budapest,
H-1111, Egry J. u. 1).} \email{matomate@renyi.hu}

\address{C.V.: Universidad Aut\'onoma de Madrid, Ciudad Universitaria de Cantoblanco, Madrid, Spain.}
\email{c.vinuesa@uam.es}

\thanks{M. Matolcsi was supported by the ERC-AdG 228005}
\thanks{C. Vinuesa was supported by grants CCG08-UAM/ESP-3906 and DGICYT MTM2008-03880 (Spain).}

\date{\today}

\begin{abstract}
We give a slight improvement of the best known lower bound for the
supremum of autoconvolutions of nonnegative functions supported in
a compact interval. Also, by means of explicit examples we
disprove a long standing natural conjecture of Schinzel and
Schmidt concerning the extremal function for such
autoconvolutions.
\end{abstract}

\maketitle

{\bf 2000 Mathematics Subject Classification.} Primary 42A85,
Secondary 42A05, 11P70.

{\bf Keywords and phrases.} {\it Autoconvolution of nonnegative
functions, $B_2[g]$-sets, generalized Sidon sets.}

\section{Introduction}

Consider the set $\mathcal F$ of all nonnegative real functions
$f$ with integral 1, supported on the interval $[-\frac{1}{4},
\frac{1}{4}]$. What is the minimal possible value for the supremum
of the autoconvolution $f\ast f$? This question (or equivalent
formulations of it) has been studied in several papers recently
\cite{sch, martin1, yu, martin}, and is motivated by its discrete
analogue, the study of the maximal possible cardinality of
$g$-Sidon sets (or $B_2[g]$ sets) in $\{1,\dots, N\}$. The
connection between $B_2[g]$ sets and autoconvolutions is described
(besides several additional results) in \cite{martin1, regicarlos,
ujcarlos}.

\medskip

If we define the autoconvolution of $f$ as
$$f\ast f(x) = \int f(t) f(x-t) \, dt,$$
we are interested in
$$S = \inf_{f\in \mathcal F} \|f\ast f\|_\infty$$
where the infimum is taken over all functions $f$ satisfying the above restrictions.

\medskip

This short note gives two contributions to the subject. On the one
hand, in Section \ref{improvedbound} we improve the best known
lower bound on $S$. This is achieved by following the ideas of Yu
\cite{yu}, and Martin \& O'Bryant \cite{martin}, and improving
them in two minor aspects. On the other hand, maybe more
interestingly, Section \ref{examples} provides counterexamples to
a long-standing natural conjecture of Schinzel and Schmidt
\cite{sch} concerning the extremal function for such
autoconvolutions. In some sense these examples open up the subject
considerably: at this point we do not have any natural conjectures
for the exact value of $S$ or any extremal functions where this
value could be attained. Upon numerical evidence we are inclined
to believe that $S\approx 1.5$, unless there exists some hidden
``magical'' number theoretical construction yielding a much smaller
value (the possibility of which is by no means excluded).

\medskip

In short, we will prove
$$1.2748 \le S \le 1.5098$$
which improves the best lower and upper bounds that were known for $S$.

\section{Notation}\label{notation}

Throughout the paper we will use the following notation (mostly
borrowed from \cite{martin}).

\medskip

Let $\mathcal F$ denote the set of nonnegative real functions $f$
supported in $[-1/4,1/4]$ such that $\int f(x) \, dx=1$. We define
the autoconvolution of $f$, $f\ast f(x) = \int f(t) f(x-t) \, dt$
and its autocorrelation, $f \circ f (x) = \int f(t) f(x+t) \, dt$.
We are interested in $S = \inf_{f\in \mathcal F} \|f\ast
f\|_\infty$. We remark here that the value of $S$ does not change
if one considers nonnegative step functions in $\mathcal F$ only.
This is proved in Theorem 1 in \cite{sch}. Therefore the reader
may assume that $f$ is square integrable whenever this is needed.

\medskip

We will need a parameter $0 < \delta \le 1/4$ and use the notation
$u=1/2+\delta$, and $\tilde g(\xi)=\frac{1}{u}\int_\R g(x)e^{-2\pi
i x\xi/u}dx$ for any function $g$. We will also use Fourier coefficients
of period 1, i.e. $\hat g(\xi)=\int_\R g(x)e^{-2\pi i x\xi} \, dx$
for any function $g$.

\medskip

We will need a nonnegative kernel function $K$ supported in
$[-\delta, \delta]$ with $\int K=1$. We will also need that
$\tilde K(j)\ge 0$ for every integer $j$. We are quite convinced
that the choice of $K$ in \cite{martin} is optimal, and we will
not change it (see equation \eqref{kx} below).

\section{An improved lower bound}\label{improvedbound}

We will follow the steps of \cite{martin} (which, in turn, is
based on \cite{yu}). We include here all the ingredients for
convenience (the proofs can be found in \cite{martin}).

\begin{lemma}\label{lemma1}{\rm [Lemmas 3.1, 3.2, 3.3, 3.4 in
\cite{martin}]}
With the notation $f, K, \delta, u$ as described above, we have

\medskip

\begin{equation}\label{eq1}\int (f\ast f (x)) K(x) \, dx \le \|f\ast f\|_\infty.
\end{equation}

\medskip

\begin{equation}\label{eq2}\int (f \circ f (x)) K(x) \, dx \le 1+\sqrt{\|f\ast f\|_\infty - 1}\sqrt{\|K\|_2^2-1}.
\end{equation}

\medskip

\begin{equation}\label{eq3}\int (f\ast f (x)+ f\circ f(x)) K(x) \, dx
=\frac{2}{u}+2u^2\sum_{j\ne 0}(\Re \tilde f(j))^2\tilde K(j).
\end{equation}

\medskip

Let $G$ be an even, real-valued, $u$-periodic function that takes
positive values on $[-1/4,1/4]$, and satisfies $\tilde G(0)=0$.
Then
\begin{equation}\label{eq4}
u^2\sum_{j\ne 0}(\Re \tilde f(j))^2\tilde K(j)\ge
\left
(\min_{0\le x\le 1/4} G(x) \right )^2
\cdot \left (
 \sum_{j:
\tilde G(j)\ne 0} \frac{\tilde G(j)^2}{\tilde K(j)} \right )^{-1}.
\end{equation}
\end{lemma}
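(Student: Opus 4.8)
My plan is to exploit the positivity assumptions on $\tilde G$ and $\tilde K$ via the Cauchy–Schwarz inequality, with the interpolation that the function $G$ itself provides on $[-1/4,1/4]$. The left-hand side of \eqref{eq4} is a weighted sum of $(\Re\tilde f(j))^2$ against the nonnegative weights $\tilde K(j)$, so by Cauchy–Schwarz, for any auxiliary sequence $(c_j)$ supported on $\{j:\tilde K(j)\ne 0\}$,
\begin{equation*}
\left(\sum_{j\ne 0} c_j\,\Re\tilde f(j)\right)^2 \le \left(\sum_{j\ne 0}(\Re\tilde f(j))^2\tilde K(j)\right)\left(\sum_{j\ne 0}\frac{c_j^2}{\tilde K(j)}\right).
\end{equation*}
The natural choice is $c_j=\tilde G(j)$, which makes the second factor exactly the sum appearing (inverted) on the right of \eqref{eq4}. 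So it remains to bound the first factor, $\left(\sum_{j\ne 0}\tilde G(j)\,\Re\tilde f(j)\right)^2$, from below.

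For that, I would recognize $\sum_j \tilde G(j)\Re\tilde f(j)$ as (up to normalization by $u$) the value of a convolution of $G$ with $f$ at the origin, or equivalently $\frac1u\int_\R G(x)f(x)\,dx$, using that $G$ is $u$-periodic with $\tilde G(0)=0$ so that the $j=0$ term drops out and $G$ is represented by its Fourier series $G(x)=\sum_{j\ne 0}u\,\tilde G(j)e^{2\pi i jx/u}$; pairing against $f$ and using that $f$ is real gives $\frac1u\int G f = \sum_{j\ne0}\tilde G(j)\overline{\tilde f(j)}$, whose real part is $\sum_{j\ne0}\tilde G(j)\Re\tilde f(j)$ since $G$ is even and real. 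Now $f$ is supported in $[-1/4,1/4]$ and there $G(x)\ge \min_{0\le x\le 1/4}G(x)=:m>0$ (using that $G$ is even), while $f\ge 0$ with $\int f=1$; hence $\frac1u\int Gf\ge \frac{m}{u}$. Squaring and multiplying through by $u^2$ yields $u^2\big(\sum_{j\ne0}\tilde G(j)\Re\tilde f(j)\big)^2\ge m^2$, and combining with Cauchy–Schwarz gives \eqref{eq4}.

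The one point requiring care — and the main obstacle — is the justification of the Fourier-series manipulation and the interchange of sum and integral: one must know the series for $G$ converges appropriately (or truncate and pass to the limit) and that $\tilde f(j)$ decays or at least that the pairing is legitimate. This is where the remark in Section \ref{notation} that one may assume $f$ is a step function (hence square integrable, with $\tilde f\in\ell^2$) is used: then $G\in L^2$ of a period as well provided $\sum|\tilde G(j)|^2<\infty$, and Parseval makes $\frac1u\int Gf=\sum_{j\ne0}\tilde G(j)\overline{\tilde f(j)}$ rigorous with no convergence worries. A secondary point is to confirm the sum on the right of \eqref{eq4} is over $j$ with $\tilde G(j)\ne0$ and that on this set $\tilde K(j)\ne 0$ too (else the fraction is undefined) — this should follow from the specific choice of $K$ in \cite{martin}, whose Fourier coefficients $\tilde K(j)$ are strictly positive, so the restriction $\tilde K(j)\ne0$ is vacuous and only $\tilde G(j)\ne0$ matters.
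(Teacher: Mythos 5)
Your argument for \eqref{eq4} is correct, and it is essentially the standard duality/Cauchy--Schwarz proof (the paper itself does not reprove this lemma; it cites Lemmas 3.1--3.4 of \cite{martin}, and your argument matches the proof of Lemma 3.4 there). The key points --- choosing $c_j=\tilde G(j)$ in Cauchy--Schwarz, the identity $\frac1u\int Gf=\sum_{j\ne0}\tilde G(j)\Re\tilde f(j)$ coming from $\tilde G(0)=0$ together with $G$ being real and even, and the lower bound $\int Gf\ge\min_{0\le x\le 1/4}G$ from $f\ge0$, $\int f=1$ and $\supp f\subseteq[-1/4,1/4]\subseteq[-u/2,u/2]$ --- are all present, and your treatment of convergence is adequate (in the application $G$ is a finite trigonometric polynomial anyway). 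One small correction: you do not need, and should not claim, that $\tilde K(j)=\frac1u|J_0(\pi\delta j/u)|^2$ is strictly positive; if $\tilde K(j)=0$ for some $j$ with $\tilde G(j)\ne0$, the sum in \eqref{eq4} is $+\infty$ and the inequality holds vacuously, so no positivity beyond $\tilde K(j)\ge0$ is required.

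The genuine gap is coverage: the lemma asserts four things, and you prove only \eqref{eq4}. Equations \eqref{eq1}, \eqref{eq2} and \eqref{eq3} are not addressed at all. For the record, \eqref{eq1} is immediate from $K\ge0$ and $\int K=1$; \eqref{eq2} requires expanding $\int(f\circ f)K=\sum_j|\hat f(j)|^2\hat K(j)$ by Parseval, isolating the $j=0$ terms, applying Cauchy--Schwarz to the remainder, and using $\sum_j|\hat f(j)|^4=\|f\ast f\|_2^2\le\|f\ast f\|_\infty\|f\ast f\|_1=\|f\ast f\|_\infty$ (the proof of Lemma \ref{lemma2b} in the paper is exactly this computation with one extra term pulled out, so it is a useful template); and \eqref{eq3} is a Parseval identity in the $u$-periodic setting, using $\widetilde{f\ast f}(j)+\widetilde{f\circ f}(j)=u\bigl(\tilde f(j)^2+|\tilde f(j)|^2\bigr)$, whose real part is $2u(\Re\tilde f(j))^2$, together with the support conditions that make the $u$-periodization legitimate. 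None of these is deep, but they are part of the statement and a complete proof must supply them.
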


\medskip

The paper \cite{martin} uses the parameter $\delta=0.13$ (thus
$u=0.63$), and the kernel function
\begin{equation}\label{kx}K(x)=\frac{1}{\delta}\beta \circ \beta
\left( \frac{x}{\delta} \right) \ \ \mathrm{where}
 \ \beta(x)=\frac{2/\pi}{\sqrt{1-4x^2}}  \ \ \left( -\frac{1}{2}<x<\frac{1}{2} \right)\end{equation} (note here
that $\|K\|_2^2<0.5747/\delta$). Finally, in equation \eqref{eq4}
they use one of Selberg's functions, $G(x)=G_{0.63, 22}(x)$ defined
in Lemma 2.3 of \cite{martin}. Combining the statements of Lemma
\ref{lemma1} above they obtain
\begin{eqnarray}\label{basic} \|f\ast f\|_\infty + 1 + \sqrt{\|f\ast f\|_\infty-1}\sqrt{\|K\|_2^2-1} \ge \\
\ge \frac{2}{u}+2\left (\min_{0\le x\le 1/4} G(x) \right )^2 \cdot \left ( \sum_{j: \tilde G(j)\ne 0} \frac{\tilde G(j)^2}{\tilde K(j)} \right )^{-1} \nonumber
\end{eqnarray}
and substituting the values and estimates they have for $u$,
$\tilde G(j)$, $\tilde K(j)$, $\min_{0\le x\le 1/4} G(x)$ and
$\|K\|_2^2$ the bound $\|f\ast f\|_\infty \ge 1.262$ follows.

\medskip

Our improvement of the lower bound on $\|f\ast f\|_\infty$ comes in two steps.
First, we find a better kernel function $G$ in equation
\eqref{basic}. This is indeed plausible because Selberg's
functions $G_{u,n}$ do not correspond to the specific choice of
$K$ in \cite{martin} in any way, therefore we can expect an
improvement by choosing $G$ so as to minimize the sum $\sum_{j:
\tilde G(j)\ne 0} \frac{\tilde G(j)^2}{\tilde K(j)}$, while
keeping $\min_{0\le x\le 1/4} G(x)\ge 1$.

\medskip

Next, we observe that if $\|f\ast f\|_\infty$ is small then the first Fourier
coefficient of $f$ must also be small in absolute value, and we
use this information to get a slight further improvement. We will
also indicate how the method could yield further improvements.

\begin{theorem}\label{main}
If $f: [\frac{-1}{4},\frac{1}{4}]\to \R_+$ is a nonnegative
function with $\int f=1$, then $\|f\ast f\|_\infty \ge 1.2748$.
\end{theorem}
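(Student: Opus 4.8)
The plan is to start from inequality \eqref{basic}, which follows by combining \eqref{eq1}, \eqref{eq2}, \eqref{eq3}, \eqref{eq4} of Lemma \ref{lemma1}, and to improve the numerical outcome in the two ways flagged in the text. First I would keep $\delta=0.13$, $u=0.63$ and the kernel $K$ from \eqref{kx} (the paper explicitly declines to change $K$), and compute the Fourier coefficients $\tilde K(j)$ with enough precision; since $K = \frac1\delta \beta\circ\beta(\cdot/\delta)$ with $\beta$ the (normalized) arcsine-type density, the $\tilde K(j)$ are squares of the Fourier coefficients of a dilate of $\beta$, hence nonnegative and rapidly decaying, which is exactly what \eqref{eq4} needs. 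Then, rather than plugging in Selberg's function $G_{0.63,22}$, I would \emph{optimize} the choice of $G$: restrict to even, real, $u$-periodic trigonometric polynomials $G(x) = \sum_{|j|\le n} c_j e^{2\pi i j x/u}$ with $\tilde G(0)=c_0=0$, and minimize the quadratic form $\sum_{j:\tilde G(j)\ne 0} \tilde G(j)^2/\tilde K(j)$ subject to the linear-type constraint $\min_{0\le x\le 1/4} G(x)\ge 1$. Discretizing $[0,1/4]$ on a fine grid turns this into a (convex) quadratic program; solving it numerically for a suitable degree $n$ produces a better constant in \eqref{eq4} than Selberg's function does, because $G_{u,n}$ was never tailored to this particular $K$.

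The second ingredient is to exploit that a small $\|f\ast f\|_\infty$ forces $|\tilde f(1)|$ to be small. Concretely, $\int (f\ast f(x))\cos(2\pi x/u)\,dx = u^2(\Re\tilde f(1))^2 - u^2(\Im \tilde f(1))^2 + \cdots$, or more simply one uses that $\widehat{f\ast f} = (\hat f)^2$ together with $f\ge 0$, $\int f = 1$, $\supp f\subseteq[-1/4,1/4]$ to bound a low-frequency Fourier coefficient of $f$ in terms of $\|f\ast f\|_\infty$; an inequality of the shape $(\Re\tilde f(1))^2 \le \psi(\|f\ast f\|_\infty)$ with $\psi$ small when $\|f\ast f\|_\infty$ is near $1$. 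Feeding this back into \eqref{eq3}, the $j=1$ (and $j=-1$) term of $2u^2\sum_{j\ne0}(\Re\tilde f(j))^2\tilde K(j)$ can be pulled out and bounded separately, so instead of estimating that whole sum from below by \eqref{eq4} applied to all $j\ne0$, one estimates only the $|j|\ge 2$ part by a variant of \eqref{eq4} (now optimizing $G$ under the extra freedom that $\tilde G(\pm1)$ may be nonzero and is cheap), and adds the explicit small $|j|=1$ contribution. This yields a strengthened version of \eqref{basic}, and chasing the resulting one-variable inequality in $x=\|f\ast f\|_\infty$ gives $x\ge 1.2748$.

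The last step is just bookkeeping: assemble the sharpened \eqref{basic}-type inequality
\begin{equation*}
x + 1 + \sqrt{x-1}\,\sqrt{\|K\|_2^2-1} \ \ge\ \frac{2}{u} + (\text{optimized lower bound depending on }x),
\end{equation*}
verify it has no solution $x < 1.2748$ by monotonicity of the left side and of the relevant branch on the right, and record rigorous rational bounds (rounding in the safe direction) for every numerical quantity that enters: $u=0.63$, $\|K\|_2^2<0.5747/\delta$, the finitely many $\tilde K(j)$ used, the optimal value of the quadratic program, $\min_{0\le x\le1/4}G(x)\ge 1$, and the constant in the $|\tilde f(1)|$ bound. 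The main obstacle, and the only genuinely delicate point, is making the optimization of $G$ \emph{rigorous}: the quadratic program is solved numerically, so one must certify that the exhibited $G$ (a concrete trigonometric polynomial with explicit rational-ish coefficients) actually satisfies $G(x)\ge 1$ on all of $[0,1/4]$ — not merely on the grid — which requires an honest estimate of the modulus of continuity (or a second-derivative bound) of $G$, and that the claimed value of $\sum \tilde G(j)^2/\tilde K(j)$ is a valid lower bound for the true infimum; everything else is a controlled numerical verification.
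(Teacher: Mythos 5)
Your first step---fixing the kernel $K$ from \eqref{kx} and numerically optimizing the trigonometric polynomial $G$ in \eqref{eq4} against it, then certifying $G\ge 1$ on $[0,1/4]$ off the grid---is exactly what the paper does, and you correctly identify the one genuinely delicate point of rigor there. But two concrete issues put the target value $1.2748$ in doubt. First, you keep $\delta=0.13$; the paper had to move to $\delta=0.138$ to push the gain parameter from roughly $0.0342$ (Selberg's $G$) to above $0.0713$ (optimized $G$), and even then the base bound is only $1.2743$, with the final answer $1.27481$ clearing $1.2748$ by a hair. The paper's own remark puts the theoretical ceiling of the whole method near $1.276$ at $\delta\approx 0.14$, so at $\delta=0.13$ you may simply not have enough room, no matter how well you optimize $G$.

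Second, your routing of the ``small first Fourier coefficient'' information is different from the paper's and has a gap. You propose to feed the bound on the first coefficient into the \emph{lower-bound} side, pulling the $j=\pm1$ term out of $2u^2\sum_{j\ne 0}(\Re\tilde f(j))^2\tilde K(j)$ and re-running \eqref{eq4} on $|j|\ge 2$. Two problems: (i) the coefficient you need there is the period-$u$ coefficient $\tilde f(1)$, and the rearrangement argument that bounds $|\hat h(1)|$ for $h=f\ast f$ (the paper's Lemma \ref{lemmah}) does not transfer directly to frequency $1/u$, because $\supp(f\ast f)=[-1/2,1/2]$ spans more than one period of $\cos(2\pi x/u)$ when $u<1$, so the superlevel set of the cosine on the support need not be a single interval; (ii) the ``explicit small $|j|=1$ contribution'' to the quadratic form is only bounded \emph{above} by the coefficient bound---it can be zero---so it cannot be added back as a positive gain, and all the profit must come from the modified Cauchy--Schwarz on $|j|\ge2$, which is not obviously enough. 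The paper avoids both issues by inserting the information into the \emph{upper-bound} side instead: it refines \eqref{eq2} by pulling the $j=\pm1$ terms out of the period-1 Parseval expansion of $\int(f\circ f)K$ (Lemma \ref{lemma2b}, with $z_1=|\hat f(1)|$ and $k_1=\hat K(1)$), bounds $z_1\le 0.50426$ via Lemma \ref{lemmah} applied to $h=f\ast f$ under the assumption $\|f\ast f\|_\infty<1.2748$, and then checks that the resulting lower bound $l(z_1)$ is decreasing on $[0,0.50426]$ with $l(0.50426)=1.27481$. Your architecture could in principle be made to work, but as written it neither justifies the coefficient bound it needs nor demonstrates that it closes the last $0.0005$.
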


\begin{proof}

Let $K(x)$ be defined by \eqref{kx}. As in \cite{martin} we make
use of the facts that $\|K\|_2^2< 0.5747/\delta$, and $\tilde
K(j)=\frac{1}{u}|J_0(\pi \delta j/u)|^2$ where $J_0$ is the Bessel
$J$-function of order 0.

\medskip

As described above, the main improvement comes from finding a
better kernel function $G$ in equation \eqref{basic}. Indeed, if
we set $G(x)=\sum_{j=1}^n a_j \cos (2\pi j x/u)$, then $\tilde
G(j)=\frac{a_{|j|}}{2}$ for $-n\le j \le n$ ($j\ne 0$), and thus
equation \eqref{basic} takes the form
\begin{eqnarray}\label{special}
\|f\ast f\|_\infty + 1 + \sqrt{\|f\ast f\|_\infty - 1} \sqrt{0.5747/\delta-1} \ge \\
\ge \frac{2}{u}+\frac{4}{u}\left (\min_{0\le x\le 1/4} G(x) \right )^2
\cdot \left ( \sum_{j=1}^{n} \frac{a_j^2}{|J_0(\pi \delta j/u)|^2} \right )^{-1}. \nonumber
\end{eqnarray}
For brevity of notation let us introduce the ``gain-parameter''
$a=\frac{4}{u}\left (\min_{0\le x\le 1/4} G(x) \right )^2\left (
 \sum_{j=1}^{n} \frac{a_j^2}{|J_0(\pi \delta
j/u)|^2} \right )^{-1}$. We note for the record that $a\approx
0.0342$ for the choices $\delta=0.13$ and $G(x)=G_{0.63,22}(x)$ in
\cite{martin}. For any fixed $\delta$ we are therefore led to the
problem of maximizing $a$ (while we may as well assume that
$\min_{0\le x\le 1/4} G(x)\ge 1$, as $G$ can be multiplied by any
constant without changing the gain $a$). This problem seems
hopeless to solve analytically, but one can perform a numerical
search using e.g. the ``Mathematica 6'' software. Having done so, we
obtained that for $\delta=0.138$ and $n=119$ there exists a
function $G(x)$ with the desired properties such that $a>0.0713$.
The coefficients $a_j$ of $G(x)$ are given in the Appendix.
Therefore, using this function $G(x)$ and $\delta=0.138$ in
equation \eqref{special} we obtain $S\ge 1.2743.$

\bigskip

{\bf Remark.} One can wonder how much further improvement could be
possible by choosing the optimal $\delta$ and the optimal $G(x)$
corresponding to it. The answer is that there is {\it very little}
room left for further improvement, the theoretical limit of the
argument being somewhere around $1.276$. To see this, let
$f_s(x)=\frac{1}{2}(f(x)+f(-x))$ denote the symmetrization of $f$,
let $\beta_{\delta}(x)=\frac{1}{\delta}\beta(\frac{x}{\delta})$
(where $\beta(x)$ is defined in \eqref{kx})  and reformulate
equation \eqref{eq3} as follows:
\begin{equation}\label{eq3uj}\int (f\ast f (x)+ f\circ f(x)) K(x) \, dx
= 2 \int (f_s \ast \beta_{\delta}(x))^2 dx = 2 \|f_s\ast
\beta_{\delta}\|_2^2.
\end{equation}
This equality is easy to see using Parseval and the fact that
$\tilde K(j)= u (\tilde \beta_{\delta}(j))^2$. Now, with
$\beta_{\delta}(x)$ being given, the best lower bound we can
possibly hope to obtain for the right hand side is $\inf_{f_s}
\|f_s\ast \beta_{\delta}\|_2^2$, where the infimum is taken over
all nonnegative, symmetric functions $f_s$ with integral 1. To
calculate this infimum, one can discretize the problem, i. e.
approximate $\beta_{\delta}(x)$ and $f_s(x)$ by step functions,
the heights of the steps of $f_s$ being parameters. Then one can
minimize the arising multivariate quadratic polynomial by
computer. Finally, we can use equations \eqref{eq1}, \eqref{eq2}
and \eqref{eq3uj} to obtain a lower bound for $\|f\ast
f\|_\infty$. We have done this\footnote{The authors are grateful
to M. N. Kolountzakis for pointing out that this minimization
problem can indeed be solved numerically due to convexity
arguments.} for several values of $\delta$ and it seems that best
lower bound is achieved for $\delta \approx 0.14$ where we obtain
$\|f\ast f\|_\infty \ge 1.276$. We remark that all this could be
done rigorously, but one needs to control the error arising from
the discretization, and the sheer documentation of it is simply
not worth the effort, in view of the minimal gain.

\bigskip

We can further improve the obtained result a little bit by
exploiting some information on the Fourier coefficients of $f$.
For this we need two easy lemmas.

\small
\begin{lemma}\label{lemma2b}
Using the notation $z_1=|\hat f(1)|$ and $k_1=\hat K(1)=\hat
K(-1)$, where $K$ is defined by equation \eqref{kx}, we have
\begin{eqnarray}&&\label{eq2b}\int (f\circ f (x)) K(x) dx \le \\
&&\le 1+2z_1^2k_1 +\sqrt{\|f\ast f\|_\infty-1-2z_1^4}\sqrt{\|K\|_2^2-1-2k_1^2}. \nonumber
\end{eqnarray}
\end{lemma}
\begin{proof}
This is an obvious modification of Lemma 3.2 in \cite{martin}.
Namely,
\begin{eqnarray*}
\int (f\circ f (x)) K(x) dx & = & \sum_{j\in\Z} (\widehat{f\circ
f}(j))\hat K(j) \\
& = & 1 + 2z_1^2k_1 + \sum_{j\ne 0,\pm 1} |\hat f(j)|^2\hat
K(j) \\
&\le&  1+2z_1^2k_1+ \sqrt{\sum_{j\ne 0,\pm 1} |\hat f(j)|^4}\sqrt{\sum_{j\ne
0,\pm 1} \hat K(j)^2}
\end{eqnarray*}

\begin{eqnarray*}
&=&1+2z_1^2k_1+\sqrt{\|f \ast f\|_2^2-1-2z_1^4}\sqrt{\|K\|_2^2-1-2k_1^2}\\
&\le& 1+2z_1^2k_1+\sqrt{\|f\ast f\|_\infty-1-2z_1^4}\sqrt{\|K\|_2^2-1-2k_1^2}.
\end{eqnarray*}
\end{proof}

The next observation is that $z_1$ must be quite small if $\|f\ast
f\|_\infty$ is small. This is established by an application of the
following general fact (the discrete version of which is contained
in \cite{green}).

\begin{lemma}\label{lemmah}
If $h$ is a nonnegative function with $\int h=1$, supported on the
interval $[-\frac{1}{2},\frac{1}{2}]$ and bounded above by $M$, then $|\hat h(1)|\le \frac{M}{\pi} \sin \frac{\pi}{M}$.
\end{lemma}
\begin{proof}
Observe first that
$$\hat h (1) = \int_{\R} h(x) e^{-2 \pi i x} \, dx = e^{-2 \pi i t} \int_{\R} h(x + t) e^{-2 \pi i x} \, dx$$
and with a suitable choice of $t$, the last integral, $\int_{\R}
h(x + t) e^{-2 \pi i x} \, dx$, becomes real and nonnegative.
Taking absolute values we get
$$|\hat h(1)| = \int_{\R} h(x + t)\cos (2 \pi x) dx.$$
The lemma becomes obvious now, because in order to maximize this integral, $h(x+t)$ needs to be concentrated on the largest values of the cosine function, so
$$|\hat h(1)| \le \int_{-\frac{1}{2M}}^{\frac{1}{2M}} M \cos (2 \pi x) \, dx = \frac{M}{\pi} \sin \frac{\pi}{M}.$$
\end{proof}

\normalsize
It is now easy to conclude the proof of Theorem \ref{main}. Assume
$\|f\ast f\|_\infty<1.2748$. By Lemma \ref{lemmah} we conclude that
$$|\hat f(1)|=\sqrt{|\widehat{f\ast f}(1)|}\le \sqrt{\frac{1.2748}{\pi} \sin
\frac{\pi}{1.2748}} < 0.50426.$$

However, using Lemma \ref{lemma2b} instead of equation \eqref{eq2}
we can replace equation \eqref{special} by
\begin{eqnarray}\label{modified}
\frac{2}{u}+a &\le& \|f\ast f\|_\infty+1+2z_1^2k_1 +\\
&+&\sqrt{\|f\ast f\|_\infty-1-2z_1^4}\sqrt{0.5747/\delta-1-2k_1^2} \nonumber
\end{eqnarray}
Substituting $\delta=0.138$, $k_1=|J_0(\pi \delta)|^2$ and
$a=0.0713$ we obtain a lower bound on $\|f\ast f\|_\infty$ as a function of $z_1$.
This function $l(z_1)$ is monotonically decreasing in the interval
$[0, 0.50426]$ therefore the smallest possible value for $\|f\ast f\|_\infty$ is
attained when we put $z_1=0.50426$. In that case we get
$\|f\ast f\|_\infty=1.27481$, which concludes the proof of the theorem.
\end{proof}

\bigskip

{\bf Remark.} In principle, the argument above could be improved
in several ways.

First, Lemma \ref{lemmah} does not exploit the fact that $h(x)$ is
an autoconvolution. It is possible that a much better upper bound
on $|\hat h(1)|$ can be given in terms of $M$ if we exploit that
$h=f\ast f$.

Second, for any value of $\delta \le 1/4$ and any suitable kernel
functions $K$ and $G$ we obtain a lower bound, $l(z_1)$, for $\|f\ast f\|_\infty$ as a
function of $z_1$. A bound $\|f\ast f\|_\infty \ge s_0$ will follow if $z_1$ does
not fall into the ``forbidden set'' $F=\{x: l(x) < s_0\}$. In the
argument above we put $s_0=1.2748$ and, with our specific choices
of $\delta$, $K$ and $G$, the forbidden set was the interval
$F = (0.504433,0.529849)$, and we could prove that $z_1$ must be
outside this set. However, when altering the choices of $\delta$,
$K$ and $G$ the forbidden set $F$ also changes. In principle it
could be possible that two such sets $F_1$ and $F_2$ are disjoint,
in which case the bound $\|f\ast f\|_\infty \ge s_0$ follows automatically.

Third, it is possible to pull out further Fourier coefficients
from the Parseval sum in Lemma \ref{lemma2b}, and analyze the
arising functions $l(z_1, z_2, \dots )$.

\section{Counterexamples}\label{examples}

Some papers in the literature conjectured that $S=\pi/2$, with the
extremal function being
$$f_0(x)=\frac{1}{\sqrt{2x+1/2}},\ x \in \left(-\frac{1}{4}, \frac{1}{4} \right).$$
Note that $\|f_0\ast f_0\|_\infty=\pi/2=1.57079\dots$ In particular, the
last remark of \cite{sch} seems to be the first instance where
$\pi/2$ is suggested as the extremal value, while the recent paper
\cite{martin} includes this conjecture explicitly as Conjecture 5.1.
In this section we disprove this conjecture by means of specific
examples. The down side of such examples, however, is that we do
not arrive at any reasonable new conjecture for the true value
of $S$ or the extremal function where it is attained.

\medskip

The results of this section are produced by computer search and we
do not consider them deep mathematical achievements. However, we
believe that they are important contributions to the subject,
mostly because they can save considerable time and effort in the
future to be devoted to the proof of a natural conjecture which is
in fact false. We also emphasize here that although we disprove
the conjectures made in \cite{sch} and in \cite{martin}, this does
not reduce the value of the main results of those papers in any
way.

\medskip

The counterexamples are produced by a computer search. This is
most conveniently carried out in the discretized version of the
problem. That is, we take an integer $n$ and consider only
nonnegative step functions which take constant values $a_j$ on the
intervals $[-\frac{1}{4}+\frac{j}{2n}, -\frac{1}{4}+\frac{j+1}{2n})$
for $j=0, 1, \dots, n-1$. This is equivalent to considering all the nonzero polynomials $P(x)=a_0+a_1x+\dots+
a_{n-1}x^{n-1}$ with nonnegative coefficients such that $\sum_{j=0}^{n-1}a_j=\sqrt{2n}$ and their squares
$P^2(x)=b_0+b_1x+\dots + b_{2n-2}x^{2n-2}$, and asking for the infimum of the
maximum of the $b_j$'s. Schinzel and Schmidt proved \cite{sch} that this value is $\ge S$ and its limit when
$n \to \infty$ is $S$.

\begin{note}
Our constant $S$  can also be defined as $S = \inf_{g \in \mathcal
G} \frac {\|f \ast f\|_{\infty}}{\|f\|_1^2}$ where
 $\mathcal G$ is the set of all nonnegative real functions
$g$, not identically $0$, supported on the interval
$[-\frac{1}{4}, \frac{1}{4}]$.

The same thing  happens in the discrete version. We can consider
the set $\mathcal P$ of all nonzero polynomials of degree $\le
n-1$ with nonnegative real coefficients $P(x) = a_0 + a_1x + \dots
+ a_{n-1}x^{n-1}$ and their squares $P^2(x) = b_0 + b_1x + \dots +
b_{2n-2}x^{2n-2}$ and ask for the value of
\begin{equation}\label{masikdef}2 n
\inf_{P \in \mathcal P} \frac{\max_j b_j}{\left( \sum_{j=0}^{n-1}
a_j \right)^2},\end{equation} and we will obtain the same value
$S$ as before.

Although our examples  will be ``normalized'' in order to fit the
first definitions (i.e. all integrals will be normalized to 1, and
all sums will be normalized to $\sqrt{2n}$), most of the
computations we have been carried out using these other ones
(which are more convenient and closer to the ones given by
Schinzel and Schmidt). This note also justifies the fact that it
is not a problem if we have an integral which is not exactly equal
to 1 or a sum of coefficients in a polynomial which is not exactly
equal to $\sqrt{2n}$ because of small numerical errors.
\end{note}

While we can only search for {\it local} minima numerically, using
the ``Mathematica 6'' software we have been able to find examples
of step functions with $\|f \ast f\|_{\infty} < 1.522$, much lower
than $\pi/2$. Subsequently, better examples were produced with the LOQO
solver (Student version for Linux and on the NEOS
server\footnote{We are grateful to Imre Barany and Robert J.
Vanderbei who helped us with a code for LOQO.}), reaching the
value $\|f \ast f\|_{\infty}=1.51237...$. The best example we are
currently aware of has been produced by an iterative algorithm
designed by M. N. Kolountzakis and the first author. The idea is as
follows: take any step function $f=(a_0, a_1, \dots, a_{n-1})$ as a
starting point, normalized so that $\sum a_j=\sqrt{2n}$. By means
of linear programming it is easy (and quick) to find the step
function $g_0=(b_0, b_1, \dots, b_{n-1})$ which maximizes $\sum
b_j$ while keeping $\|f\ast g_0\|_\infty \le \|f\ast f\|_\infty$
(obviously, $\sum b_j\ge \sqrt{2n}$ because the choice $g_0=f$ is
legitimate). We then re-normalize $g_0$ as
$g=\frac{\sqrt{2n}g_0}{\sum b_j}$. Then $\|f\ast g\|_\infty \le
\|f\ast f\|_\infty$ by construction. If the inequality is strict
then it is easy to see that for small $t>0$ the function
$h=(1-t)f+tg$ will be better than our original $f$, i. e. $\|h\ast
h\|_\infty \le \|f\ast f\|_\infty$. And we iterate this procedure
until a fix-point function is reached.

The best example produced by this method is included in the
Appendix, achieving the value $\|f\ast f\|_\infty = 1.50972...$.
Figure~\ref{fig:step} shows a plot of the autoconvolution of this function.

\begin{figure}[htb]
\center{\includegraphics[width=\textwidth]{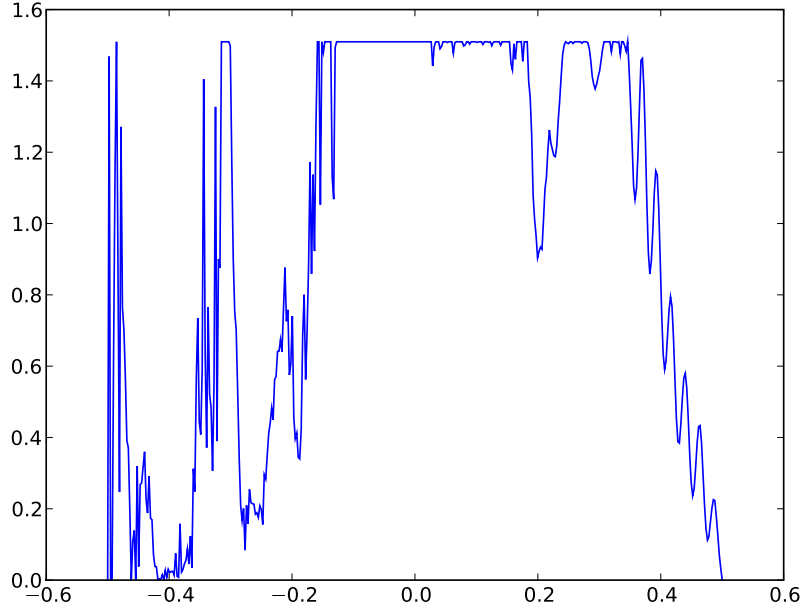}}
\caption{The autoconvolution of the best step function we are aware of, giving $\|f\ast f\|_\infty = 1.50972...$}
\label{fig:step}
\end{figure}

\medskip

Interestingly, it seems that the smallest value of $n$ for which a
counterexample exists is as low as $n = 10$, giving the value $1.56618...$
We include the coefficients of one of these polynomials here, as it is fairly easy
to check even by hand:
{\footnotesize
\begin{verbatim}
      0.41241661  0.45380115  0.51373388  0.6162143  0.90077119
      0.14003277  0.16228556  0.19989487  0.2837527  0.78923292
\end{verbatim}
}

\medskip

The down side of such examples is that it seems virtually
impossible to guess what the extremal function might be. We have
looked at the plot of many step functions $f$ with integral 1 and $\|f\ast
f\|_\infty <1.52$ and several different patterns seem to arise,
none of which corresponds to an easily identifiable function.
Looking at one particular pattern we have been able to produce an
analytic formula for a function $f$ which gives a value
for $\|f\ast f\|_\infty \approx 1.52799$, comfortably smaller than
$\pi/2$ but which is somewhat far from the minimal
value we have achieved with step functions. This function $f$ is
given as:
\begin{equation} \label{functioneq}
f(x) =
\left\{
\begin{array}{lll}
\dfrac {1.392887}{(0.00195 - 2x)^{1/3}}  \quad \textrm{ if } \ x \in (-1/4, 0) \\
\\
\dfrac{0.338537}{(0.500166 - 2x)^{0.65}} \quad \textrm{ if } \ x \in (0, 1/4) \\
\end{array}
\right.
\end{equation}

Figure~\ref{fig:functioneq} shows a plot of the autoconvolution of this function.

\begin{figure}[htb]
\center{\includegraphics[width=\textwidth]{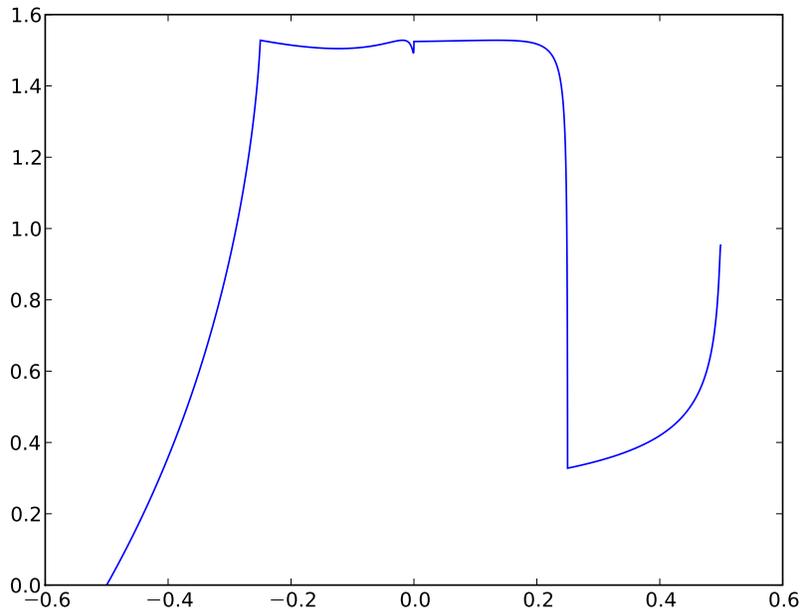}}
\caption{The autoconvolution of the function given by equation (\ref{functioneq}), giving $\|f\ast f\|_\infty \approx 1.52799$.}
\label{fig:functioneq}
\end{figure}

\bigskip

The paper \cite{martin} also states in Conjecture 2 that an
inequality of the form \begin{equation}\label{c}\|f\ast f\|_2^2\le
c \|f\ast f\|_\infty \|f\ast f\|_1 \end{equation} should be true
with the constant $c=\frac{\log 16}{\pi}$, and once again the
function $f_0$ above producing the extremal case. While we tend to
believe that such an inequality is indeed true with some constant
$c < 1$, we have been able to disprove this conjecture too, and
find examples where $c>\frac{\log 16}{\pi}$. We have not made
extensive efforts to maximize the value of $c$ in our numerical
search. In the Appendix we include one example of a step function
with $n = 20$ where $c = 0.88922... > \frac{\log 16}{\pi} =
0.88254...$

\medskip

We make a last remark here that could be of interest. It is
somewhat natural to believe that the minimal possible value of $\|
f \ast f \|_{\infty}$ does not change if we allow $f$ to take
negative values (but keeping $\int f=1$). However, this does not
seem to be the case. We have found examples of step functions $f$
for which $\| f \ast f \|_{\infty} = 1.45810...$, much lower than
the best value ($\| f \ast f \|_{\infty} =1.50972...$) we have for
nonnegative functions $f$. This example is also included in the
Appendix.

\section*{Akcnowledgements}
We thank Imre Ruzsa, Javier Cilleruelo, Mihail Kolountzakis and
Boris Bukh for many valuable suggestions and stimulating
discussions on the subject.

\section*{Appendix (online version only)}
Here we list the numerical values corresponding to the results of
the previous sections.

\medskip

For $\delta=0.138$ (and thus $u=0.638$) we define the kernel
function $G(x)$ used in Theorem \ref{main} as
$G(x)=\sum_{j=1}^{119} a_j \cos (2\pi j x/u)$, with the
coefficients $a_j$ given by the following list:

{\footnotesize
\begin{verbatim}
  2.16620392e+00  -1.87775750e+00   1.05828868e+00  -7.29790538e-01
  4.28008515e-01   2.17832838e-01  -2.70415201e-01   2.72834790e-02
 -1.91721888e-01   5.51862060e-02   3.21662512e-01  -1.64478392e-01
  3.95478603e-02  -2.05402785e-01  -1.33758316e-02   2.31873221e-01
 -4.37967118e-02   6.12456374e-02  -1.57361919e-01  -7.78036253e-02
  1.38714392e-01  -1.45201483e-04   9.16539824e-02  -8.34020840e-02
 -1.01919986e-01   5.94915025e-02  -1.19336618e-02   1.02155366e-01
 -1.45929982e-02  -7.95205457e-02   5.59733152e-03  -3.58987179e-02
  7.16132260e-02   4.15425065e-02  -4.89180454e-02   1.65425755e-03
 -6.48251747e-02   3.45951253e-02   5.32122058e-02  -1.28435276e-02
  1.48814403e-02  -6.49404547e-02  -6.01344770e-03   4.33784473e-02
 -2.53362778e-04   3.81674519e-02  -4.83816002e-02  -2.53878079e-02
  1.96933442e-02  -3.04861682e-03   4.79203471e-02  -2.00930265e-02
 -2.73895519e-02   3.30183589e-03  -1.67380508e-02   4.23917582e-02
  3.64690190e-03  -1.79916104e-02   7.31661649e-05  -2.99875575e-02
  2.71842526e-02   1.41806855e-02  -6.01781076e-03   5.86806100e-03
 -3.32350597e-02   9.23347466e-03   1.47071722e-02  -7.42858080e-04
  1.63414270e-02  -2.87265671e-02  -1.64287280e-03   8.02601605e-03
 -7.62613027e-04   2.18735533e-02  -1.78816282e-02  -6.58341101e-03
  2.67706547e-03  -6.25261247e-03   2.24942824e-02  -8.10756022e-03
 -5.68160823e-03   7.01871209e-05  -1.15294332e-02   1.83608944e-02
 -1.20567880e-03  -3.13147456e-03   1.39083675e-03  -1.49312478e-02
  1.32106694e-02   1.73474188e-03  -8.53469045e-04   4.03211203e-03
 -1.55352991e-02   8.74711543e-03   1.93998895e-03  -2.71357322e-05
  6.13179585e-03  -1.41983972e-02   5.84710551e-03   9.22578333e-04
 -2.16583469e-04   7.07919829e-03  -1.18488582e-02   4.39698322e-03
 -8.91346785e-05  -3.42086367e-04   6.46355636e-03  -8.87555371e-03
  3.56799654e-03  -4.97335419e-04  -8.04560326e-04   5.55076717e-03
 -7.13560569e-03   4.53679038e-03  -3.33261516e-03   2.35463427e-03
  2.04023789e-04  -1.27746711e-03   1.81247830e-04
\end{verbatim}
}

\medskip

The best nonnegative step function we are currently aware of,
reaching the value $\| f \ast f\|_{\infty} = 1.50972...$, is
attained at $n = 208$. The coefficients of its associate
polynomial (a polynomial of degree 207 whose coefficients sum up
to $\sqrt{416}$) are:

{\footnotesize
\begin{verbatim}
    1.21174638  0.          0.          0.25997048  0.47606812
    0.62295219  0.3296586   0.          0.29734381  0.
    0.          0.          0.          0.          0.
    0.          0.00846453  0.05731673  0.          0.13014906
    0.          0.08357863  0.05268549  0.06456956  0.06158231
    0.          0.          0.          0.          0.
    0.          0.          0.          0.          0.
    0.          0.          0.          0.          0.
    0.          0.          0.          0.          0.
    0.02396999  0.          0.          0.05846552  0.
    0.          0.          0.          0.          0.0026332
    0.0509835   0.          0.1283313   0.0904924   0.21232176
    0.24866151  0.09933512  0.01963586  0.01363895  0.32389841
    0.          0.          0.14467517  0.0129752   0.
    0.          0.16299837  0.38329665  0.11361262  0.32074656
    0.17344291  0.33181372  0.24357561  0.2577003   0.20567824
    0.13085743  0.17116496  0.14349025  0.07019695  0.
    0.          0.          0.          0.          0.
    0.          0.          0.          0.          0.
    0.          0.          0.          0.          0.
    0.          0.0131741   0.0342541   0.0427565   0.03045044
    0.07900079  0.07020678  0.08528342  0.09705597  0.0932896
    0.09360206  0.06227754  0.07943462  0.08176106  0.10667185
    0.10178412  0.11421821  0.07773213  0.11021377  0.12190377
    0.06572457  0.07494855  0.          0.          0.02140202
    0.          0.          0.0231478   0.00127997  0.
    0.04672881  0.03886266  0.11141784  0.00695668  0.0466224
    0.03543131  0.08803511  0.04165729  0.10785652  0.06747342
    0.18785215  0.31908323  0.3249705   0.09824861  0.23309878
    0.12428441  0.03200975  0.0933163   0.09527521  0.12202693
    0.13179059  0.09266878  0.02013746  0.16448047  0.20324945
    0.21810431  0.27321179  0.25242816  0.19993811  0.13683837
    0.13304836  0.08794214  0.12893672  0.16904485  0.22510883
    0.26079786  0.27367504  0.26271896  0.20457964  0.15073917
    0.11014028  0.09896     0.0926069   0.13269111  0.17329988
    0.20761774  0.21707182  0.18933169  0.14601258  0.08531506
    0.06187865  0.06100211  0.09064962  0.12781018  0.17038096
    0.185766    0.1734501   0.14667009  0.09569536  0.06092822
    0.03219067  0.0495587   0.09657756  0.16382398  0.22606693
    0.22230709  0.19833621  0.16155032  0.09330751  0.02838363
    0.02769322  0.03349924  0.09448887  0.20517242  0.22849741
    0.24175836  0.19700135  0.18168723
\end{verbatim}
}

\medskip

The best example of a step function disproving Conjecture 2 of
\cite{martin}, we are currently aware of, is attained for $n=20$
(note that we did not make extensive efforts to optimize this
example). {\footnotesize
\begin{verbatim}
  1.27283    0.54399    0.         0.         0.         0.
  0.         0.529367   0.410195   0.46111    0.439352   0.448675
  0.444699   0.446398   0.335601   0.322369   0.240811   0.202225
  0.138305   0.0886248
\end{verbatim}
} This function reaches the value  $c=0.88922...> \frac{\log
16}{\pi}$ in equation \eqref{c}.

\medskip

Finally, the best step function we are currently aware of (which takes some negative values!), reaching the value
$\| f \ast f \|_{\infty} = 1.45810...$, is attained at $n=150$. The coefficients of its associate polynomial are:
{\footnotesize
\begin{verbatim}
    0.7506545   0.4648332   0.59759775  0.46028561  0.36666088
    0.37773841  0.16162776  0.3303943   0.15905831  0.08878588
    0.16284952 -0.09198076  0.05755583 -0.00690908 -0.08627636
   -0.17180424 -0.14778207  0.13121791  0.05268415  0.20694965
    0.25287625  0.2071192  -0.13591836  0.05354584 -0.03558645
    0.15699341 -0.06508942 -0.01435246  0.02291645  0.18877783
   -0.02751401  0.09592962  0.06666674  0.1807308   0.15543041
    0.02639022  0.01843893  0.04896963  0.0303207   0.05119754
    0.24099308  0.2244329   0.23689694  0.08980581  0.25272138
    0.26725296  0.12786816  0.16265063  0.20542404  0.06826679
    0.16905985 -0.11230055  0.26179213 -0.412312   -0.28820566
   -0.7619902  -0.78933468  0.07066217  0.05785475  0.07163788
    0.09949514  0.0659708   0.05370837  0.08441868  0.10157278
    0.07317574  0.0521853   0.08980666  0.13113512  0.05943309
    0.07517572  0.12460218  0.14885796  0.09071907  0.13017884
    0.13185969  0.15196722  0.07848544  0.14924624  0.16053609
    0.17735544  0.14470971  0.17275872  0.16058981  0.22807136
    0.20728811  0.10876597  0.21471959  0.25136905  0.15147268
    0.06366331  0.05917714  0.05995267  0.35288009  0.3224057
    0.32988077  0.41806458  0.22880318  0.2080819   0.18504847
    0.27116284  0.16066195  0.02547032  0.26150045 -0.00634039
    0.09471136 -0.00407705  0.04759596 -0.07549638 -0.30815721
   -0.00878173  0.08964445  0.23265916  0.37008611  0.18283593
    0.00240797  0.063899    0.02892268  0.10802879  0.15672677
   -0.11335258  0.10549109  0.1571762   0.13290998 -0.01251118
    0.15487122  0.15770952  0.33037764  0.03888211  0.08105707
    0.00799348  0.00375632 -0.02392944  0.15019215  0.21615677
    0.17854093  0.04104506  0.12700956  0.23964236  0.05613369
    0.14857745  0.07375734  0.02816608  0.16226977  0.01757525
   -0.23848002  0.05705152  0.29372066  0.56730329  1.105205
\end{verbatim}
}

\end{document}